\newtheorem{theorem}{Theorem}
\newtheorem{lemma}{Lemma}
\newtheorem*{conjecture}{Conjecture}
\newtheorem*{remark}{Remark}
\author{Alex Dainiak}
\title{Inverse problems for the number\\ of maximal independent sets}
\date{\today}
\begin{document}
\maketitle

\begin{abstract}
We study the following inverse graph-theoretic problem: how many vertices should a graph have given that it has a specified value of some parameter. We obtain asymptotic for the minimal number of vertices of the graph with the given number $n$ of maximal independent sets for a class of natural numbers that can be represented as concatenation of periodic binary words.
\end{abstract}

\par Problems of estimating various graph invariants play the central role in quantitative graph theory. Among the most studied invariants are connectivity, chromatic number, girth, independence number, maximal clique size, number of independent sets etc. As well as forward problems, inverse problems also are of interest. They generally can be stated as follows: find a graph (or prove its existence) that have the desired value of some parameter. The classical problem of this kind is finding a graph with the given degree sequence~\cite{erdosgallai,havel}. For a long time it was not known if there was only a finite number of naturals not being the Wiener index of trees.~\cite{wagner,wang}. An analogous question considering the number of independent sets in trees, asked in~\cite{linek}, is not yet solved, whereas some other parameters of trees are better studied (e.g.~\cite{czabarka}).

\par We now state the problems coevered in this paper in their general form. Let $\mathcal{G}$ be a family of graphs, and let~$\phi:\:\mathcal{G}\to S$ and $\psi:\:\mathcal{G}\to T$ be arbitrary functionals on $\mathcal{G}$. The \emph{existential inverse problem} for the pair~$(\mathcal{G},\,\phi)$ may be stated as follows: ``describe all $s\in S$ for which there exists a graph $G\in\mathcal{G}$ having $\phi(G)=s$''.

\par Let $S$ be the set of all values of $\psi$ for all graphs in $\mathcal{G}$. For $S\subseteq{\mathbb N}$ we call~$\mathcal{G}$ to be \emph{strongly $\phi$-complete}, if for every $s\in S$ there is $G\in\mathcal{G}$ such that $\phi(G)=s$. If such $G\in\mathcal{G}$ exists for all large enough $s\in S$, then we say that $\mathcal{G}$ is \emph{weakly $\phi$-complete}, or just \emph{$\phi$-complete}. If $\phi(G)=s$ then we say that~$s$ is \emph{realized} by~$G$.

\par If the existential inverse problem is solved positively, we can consider the \emph{optimizational inverse problem} for the triple~$(\mathcal{G},\,\phi,\,\psi)$: ``for a given~$s\in S$ find $L_{\phi,\,\psi}^\mathcal{G}(s)=\inf\{\psi(G)\mid G\in\mathcal{G},\,\phi(G)=s\}$''. As the problem of finding $L$ exactly is too hard, it is natural to consider only the asymptotic behavior of $L_{\phi,\,\psi}^\mathcal{G}(s)$ for $\phi$-complete families of graphs. If $\mathcal{G}$ is a class of all graphs, we shorten the notation $L_{\phi,\,\psi}^\mathcal{G}(s)$ to $L_{\phi,\,\psi}(s)$.

\par Denote by~$\iota(G)$ the number of all independent sets (i.\,s.) of vertices in~$G$, and by $\iota_m(G)$ the number of maximal-by-inclusion i.s. (m.\,i.\,s.) in~$G$. Finally, by $\iota_M(G)$ we denote the number of maximum independent sets in $G$. We write~$\nu(G)$ and~$\epsilon(G)$ for the number of vertices and edges in~$G$ respectively. The families of bipartite graphs and forests are denoted by $\mathcal{B}$ and $\mathcal{F}$ respectively. We write $K_r$ and $P_r$ for complete graphs and paths on $r$ vertices. $K_{r,s}$ denotes complete bipartite graph, $r$ and $s$ being the sizes of its parts. $K_{r,r}'$ stands for the \emph{corona-graph}, which can be constructed by deleting edges of some perfect matching from $K_{r,r}$. The sets of vertices and edges of graph $G$ will be denoted as $V_G$ and $E_G$. The edge between vertices $u$ and $v$ will be denoted as $uv$. A family of all m.\,i.\,s. of $G$ is denoted as $\mathcal{I}_m(G)$.

\par In the notation of the current paper, Linek proved~\cite{linek} the strong $\iota$-completeness of~$\mathcal{B}$. It justifies the consideration of the optimization problem of finding $L_{\iota,\,\nu}^{\mathcal{B}}(n)$. The trivial lower bound is $L_{\iota,\,\nu}^{\mathcal{B}}(n)\geqslant\log_2n$ (which follows from the inequation $\iota(G)\le2^{\nu(G)}$). A graph constructed in~\cite{linek} to realize a given natural number has the maximal possible sizes of parts: $\lfloor\log_2n\rfloor$ and $\lfloor\log_2(n-2^{\lfloor\log_2n\rfloor}+1)\rfloor$. For $n=2^k-1$ such graph would have $2k-2$ vertices, which is double the expected optimal size. Some $n$ of the above form can be realized more economically, as the following statement shows.

\begin{theorem}
For $k=2^t$ we have $L_{\iota,\,\nu}^{\mathcal{F}}(2^k-1)\lesssim k$.
\end{theorem}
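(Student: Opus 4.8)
The plan is to exhibit, for each $t$, an explicit forest realizing $2^{2^t}-1=2^k-1$ on roughly $k$ vertices. The two ingredients are elementary. First, $\iota$ is multiplicative over disjoint unions, $\iota(G_1\sqcup G_2)=\iota(G_1)\,\iota(G_2)$. Second, for a star one has $\iota(K_{1,r})=2^r+1$: an independent set either contains the centre, which gives exactly one such set, or avoids it and is an arbitrary one of the $2^r$ subsets of the leaves. Consequently a forest that is a disjoint union of stars $K_{1,r_1},\dots,K_{1,r_m}$ satisfies $\iota=\prod_{i=1}^{m}(2^{r_i}+1)$ and $\nu=\sum_{i=1}^{m}(r_i+1)$.

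The arithmetic heart of the argument is the classical telescoping factorization of the $t$-th Fermat-number product,
\[
    2^{2^t}-1=\prod_{j=0}^{t-1}\bigl(2^{2^j}+1\bigr),
\]
which follows by induction from $(2^{2^j}-1)(2^{2^j}+1)=2^{2^{j+1}}-1$. Matching each factor $2^{2^j}+1$ with a star $K_{1,2^j}$, I would set
\[
    F=\bigsqcup_{j=0}^{t-1}K_{1,\,2^j}.
\]
By the two observations above, $\iota(F)=\prod_{j=0}^{t-1}(2^{2^j}+1)=2^{2^t}-1=2^k-1$, while
\[
    \nu(F)=\sum_{j=0}^{t-1}\bigl(2^j+1\bigr)=(2^t-1)+t=k+\log_2 k-1 .
\]
Since $t=\log_2 k$, this yields $L_{\iota,\,\nu}^{\mathcal F}(2^k-1)\le\nu(F)=k+\log_2 k-1\lesssim k$, which is the claim.

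I do not anticipate a genuine obstacle here: the only non-routine step is recognizing that the target value $2^k-1$ admits, precisely because $k$ is a power of two, the Fermat-number factorization into pieces each of the shape $2^{2^j}+1$, and that each such piece is the independent-set count of a small star. Once that is in place, multiplicativity of $\iota$ and the telescoping of the vertex count do the rest. It is worth remarking that this upper bound is tight to leading order, since $L_{\iota,\,\nu}^{\mathcal F}(n)\ge\log_2 n$ always, so in fact $L_{\iota,\,\nu}^{\mathcal F}(2^k-1)\sim k$ for $k$ a power of two.
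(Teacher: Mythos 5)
Your proposal is correct and is essentially the paper's own argument: both use the Fermat-type factorization $2^{2^t}-1=\prod_{j=0}^{t-1}(2^{2^j}+1)$, realize each factor as $\iota(K_{1,2^j})$, and take the disjoint union of these stars, giving $2^t+t-1\lesssim k$ vertices. Your count $k+\log_2 k-1$ agrees with the paper's $2^t+t-1$, so there is nothing to add.
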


\begin{proof}
Just note that $$2^{2^t}-1=\prod_{j=0}^{t-1}(2^{2^j}+1)=\prod_{j=0}^{t-1}\iota(K_{2^j,1})=\iota\left(\bigsqcup_{j=0}^{t-1}K_{2^j,1}\right).$$ At the same time $\nu(\bigsqcup_{j=0}^{t-1}K_{2^j,1})=2^t+t-1\lesssim k$.
\end{proof}

The existential inverse problem for~$(\mathcal{B},\,\iota_m)$ is trivial, as any $n\ge4$ can be realized as the number of m.\,i.\,s. in corona-graph $K_{n-2,n-2}'$. If we consider $\psi$ to be the number of vertices of a graph, we come up with an optimizational inverse problem: ``for natural $n$ find minimal $L(n)$ such that there exists a graph on $L(n)$ vertices having $n$ maximal independent sets''. The remaining part of the paper is dedicated to estimating $L(n)$.

\section*{Bounds for $L_{\iota_m,\nu}^{\mathcal{B}}(n)$}
\begin{lemma} \label{lem_recalc_iotamgen}
Let $G$ be bipartite with parts $L_G,\,R_G$ and without isolated vertices. Let $\widetilde{G}$ be a bipartite graph, vertex-disjoint with $G$, and let $U_1$ and~$U_2$ be some subsets of first and second parts of $\widetilde{G}$ respectively. Let $G'$ be a graph obtained by connecting all vertices in $U_1$ (resp. $U_2$) to all vertices in $L_G$ (resp. $R_G$). Then we have
$$\iota_m(G')=(\iota_m(G)-2)\cdot\iota_m(\widetilde{G}\setminus(U_1\cup U_2))+\iota_m(\widetilde{G}\setminus U_1)+\iota_m(\widetilde{G}\setminus U_2)+\iota_m(\widetilde{G}+U_1+U_2),$$
where $\iota_m(\widetilde{G}+U_1+U_2)$ stands for the number of m.\,i.\,s. of $\widetilde{G}$ having non-empty intersections with both $U_1$ and $U_2$.
\end{lemma}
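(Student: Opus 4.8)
The plan is to classify the maximal independent sets of $G'$ by their trace on $V_G$, reducing the count to data about $G$ and about subgraphs of $\widetilde{G}$. As a preliminary I would record the shape of $\mathcal{I}_m(G)$: since $G$ has no isolated vertices it has an edge, so $L_G$ and $R_G$ are nonempty and distinct, and each of them is an m.\,i.\,s.\ of $G$; moreover any m.\,i.\,s.\ of $G$ disjoint from $L_G$ lies in $R_G$ and, by maximality, equals $R_G$ (a vertex of $R_G$ outside such a set has no neighbor in it), and symmetrically with $L_G$ and $R_G$ swapped. Hence $G$ has exactly two m.\,i.\,s.\ meeting only one part, namely $L_G$ and $R_G$, while the remaining $\iota_m(G)-2$ all meet both parts.

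Now fix an m.\,i.\,s.\ $I$ of $G'$ and set $I_G=I\cap V_G$ and $\widetilde{I}=I\cap V_{\widetilde G}$. The crux — and the step I expect to be the main obstacle — is the claim that either $I_G=\emptyset$ or $I_G$ is an m.\,i.\,s.\ of $G$. To prove it I would assume $I_G\ne\emptyset$, say $I_G\cap L_G\ne\emptyset$ (the case $I_G\cap R_G\ne\emptyset$ being symmetric under $L_G\leftrightarrow R_G$, $U_1\leftrightarrow U_2$); since $U_1$ is completely joined to $L_G$, independence of $I$ forces $\widetilde{I}\cap U_1=\emptyset$. If some $v\in V_G\setminus I_G$ has no neighbor in $I_G$, then for $v\in L_G$ its only extra $G'$-neighbors lie in $U_1$, which $\widetilde{I}$ avoids, so $I\cup\{v\}$ would be independent, contradicting maximality; and for $v\in R_G$, maximality forces $\widetilde{I}\cap U_2\ne\emptyset$, hence by independence $I_G\cap R_G=\emptyset$, so $I_G\subseteq L_G$, and then every vertex of $L_G\setminus I_G$ would be addable to $I$, forcing $I_G=L_G$, which is again an m.\,i.\,s. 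So $I_G$ is an m.\,i.\,s.\ of $G$, as claimed.

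Combining the claim with the preliminary structure, every m.\,i.\,s.\ of $G'$ is of exactly one of four types: $I_G=\emptyset$, $I_G=L_G$, $I_G=R_G$, or $I_G$ an m.\,i.\,s.\ of $G$ meeting both parts. I would treat the last three first. If $I_G=L_G$ then $\widetilde{I}\cap U_1=\emptyset$ is forced, the vertices of $U_1$ and of $R_G$ are automatically dominated by $L_G$, and a short check shows $I$ is maximal in $G'$ exactly when $\widetilde{I}$ is an m.\,i.\,s.\ of $\widetilde{G}\setminus U_1$; this type contributes $\iota_m(\widetilde{G}\setminus U_1)$, and symmetrically $I_G=R_G$ contributes $\iota_m(\widetilde{G}\setminus U_2)$. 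If $I_G$ meets both parts then $\widetilde{I}$ must avoid $U_1\cup U_2$, every vertex of $U_1\cup U_2$ is dominated by $I_G$, and $I$ is maximal exactly when $\widetilde{I}$ is an m.\,i.\,s.\ of $\widetilde{G}\setminus(U_1\cup U_2)$; as $I_G$ (one of $\iota_m(G)-2$ sets) and $\widetilde{I}$ are chosen independently, this contributes $(\iota_m(G)-2)\cdot\iota_m(\widetilde{G}\setminus(U_1\cup U_2))$.

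Finally, if $I_G=\emptyset$ then $I=\widetilde{I}$, domination of the nonempty sets $L_G$ and $R_G$ forces $\widetilde{I}$ to meet both $U_1$ and $U_2$, and domination of $V_{\widetilde G}$ makes $\widetilde{I}$ an m.\,i.\,s.\ of $\widetilde{G}$, so this type contributes exactly $\iota_m(\widetilde{G}+U_1+U_2)$ by definition. In each of the four cases one must also verify the converse — that every set of the stated form is genuinely an m.\,i.\,s.\ of $G'$ — which is routine given that the new edges form complete joins $U_1$–$L_G$ and $U_2$–$R_G$. Summing the contributions of the four types gives the claimed identity for $\iota_m(G')$.
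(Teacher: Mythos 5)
Your proof is correct and follows essentially the same route as the paper: classify the maximal independent sets of $G'$ by their intersection with $V_G$ (empty, equal to $L_G$ or $R_G$, or an m.\,i.\,s.\ of $G$ meeting both parts) and count each class via the corresponding subgraph of $\widetilde{G}$. You simply spell out the verifications (notably that a nonempty trace on $V_G$ must be an m.\,i.\,s.\ of $G$, and that $L_G,R_G$ are the only m.\,i.\,s.\ confined to one part) that the paper leaves as ``direct counting.''
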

\begin{proof}
The statement of the lemma can be checked by direct counting. If an m.\,i.\,s. of $G'$ contains no vetices of~$G$, then it must contain at least one vertex from both $U_1$ and~$U_2$, the number of such sets being~$\iota_m(\widetilde{G}+U_1+U_2)$. If a m.\,i.\,s. of~$G'$ contains vertices from both parts of~$G$, then it is disjoint with $U_1\cup U_2$, and its subsets in $G$ and $\widetilde{G}$ must themselves be maximal independent sets in $G$ and $\widetilde{G}$ respectively. Thus the number of such m.\,i.\,s. equals $(\iota_m(G)-2)\cdot\iota_m(\widetilde{G}\setminus(U_1\cup U_2))$. If a m.\,i.\,s. of $G$ contains all vertices of $L_G$ or whole $R_G$, then its subset in $\widetilde{G}$ will form a m.\,i.\,s. in $\widetilde{G}\setminus U_1$ or $\widetilde{G}\setminus U_2$ respectively.
\end{proof}

Let $\widetilde{G}$ be bipartite with $U_1$ and $U_2$ being some subsets of its parts. Put $$h_{\widetilde{G}}'=\iota_m(\widetilde{G}\setminus(U_1\cup U_2)),$$ $$h_{\widetilde{G}}''=(\iota_m(\widetilde{G}\setminus U_1)+\iota_m(\widetilde{G}\setminus U_2)+\iota_m(\widetilde{G}+U_1+U_2)-2\iota_m(\widetilde{G}\setminus(U_1\cup U_2))).$$

\begin{lemma}\label{lem_main_iotamgen}
Let $\Gamma$ be a finite set of bipartite graphs with selected subsets in their parts, such that $\{h_{\widetilde{G}}'k+h_{\widetilde{G}}''\mid k\in{\mathbb N},\,\widetilde{G}\in\Gamma\}\supseteq({\mathbb N}\setminus [1,n_0])$ for some $n_0$. Put
$$\gamma=\max\left\{(\log_2h_{\widetilde{G}}')^{-1}\nu(\widetilde{G})\mid \widetilde{G}\in\Gamma\right\}.$$
Then $L_{\iota_m,\,\nu}^{\mathcal{B}}(n)\le \gamma\cdot \log_2 n + O(1)$.
\end{lemma}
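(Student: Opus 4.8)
The plan is to iterate the construction of Lemma~\ref{lem_recalc_iotamgen} a controlled number of times, each time attaching a gadget from $\Gamma$ to the ``current'' graph, so that the number of m.\,i.\,s.\ grows roughly multiplicatively while the number of vertices grows additively. First I would fix the target $n$ and consider its base-related expansion: using the hypothesis that $\{h_{\widetilde G}'k+h_{\widetilde G}''\}$ covers all sufficiently large naturals, I can greedily write $n$ (or rather process it digit by digit) by repeatedly choosing, at each step, a gadget $\widetilde G\in\Gamma$ and a natural $k$ so that the residual number to be realized shrinks by a factor of about $h_{\widetilde G}'\ge 2$. Concretely, if $G$ is the graph built so far with $\iota_m(G)=m$, then attaching $\widetilde G$ via Lemma~\ref{lem_recalc_iotamgen} produces $G'$ with
\[
\iota_m(G') = (m-2)\,h_{\widetilde G}' + \bigl(h_{\widetilde G}'' + 2h_{\widetilde G}'\bigr) \cdot\text{(correction)},
\]
i.e.\ $\iota_m(G') = h_{\widetilde G}'\cdot m + h_{\widetilde G}''$ after absorbing the constants; this is exactly the affine map $m\mapsto h_{\widetilde G}'m + h_{\widetilde G}''$ appearing in the covering hypothesis, which is why that hypothesis is the right one.

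The key steps, in order, are: (1) establish a base case — find a small bipartite graph (with designated subsets of its parts) realizing each $n$ in some bounded initial range, using that any $n\ge 4$ is realized by a corona-graph as noted just before the lemma, so only $O(1)$ vertices are needed to start; (2) set up the recursion: given the residual value $r$ still to be realized, if $r$ is small enough stop and invoke the base case, otherwise pick $\widetilde G\in\Gamma$ and $k$ with $h_{\widetilde G}'k + h_{\widetilde G}'' $ matching the low-order part of $r$ so that $(r - h_{\widetilde G}'')/h_{\widetilde G}'$ is the new (integer, smaller) residual — feasibility here is exactly the covering assumption $\{h_{\widetilde G}'k+h_{\widetilde G}''\}\supseteq \mathbb N\setminus[1,n_0]$; (3) bound the number of iterations: since each step divides the residual by at least $h_{\widetilde G}'\ge 2$, the recursion terminates after at most $\log_2 n + O(1)$ steps, and more precisely after a step using gadget $\widetilde G$ the residual drops by the factor $h_{\widetilde G}'$, so if gadget $\widetilde G$ is used $a_{\widetilde G}$ times then $\prod_{\widetilde G} (h_{\widetilde G}')^{a_{\widetilde G}} \le n$; (4) bound the vertex count: $\nu$ of the final graph is $\sum_{\widetilde G} a_{\widetilde G}\,\nu(\widetilde G) + O(1)$, and from $\sum a_{\widetilde G}\log_2 h_{\widetilde G}' \le \log_2 n$ together with $\nu(\widetilde G) \le \gamma \log_2 h_{\widetilde G}'$ (the definition of $\gamma$) we get $\sum a_{\widetilde G}\nu(\widetilde G) \le \gamma\sum a_{\widetilde G}\log_2 h_{\widetilde G}' \le \gamma\log_2 n$, giving the claimed bound $L_{\iota_m,\nu}^{\mathcal B}(n)\le \gamma\log_2 n + O(1)$.

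I expect the main obstacle to be step~(2): making the greedy choice of $(\widetilde G, k)$ at each stage so that the residual is always a nonnegative integer that is either $\le n_0$ (handled by the base case) or again large enough for the covering hypothesis to apply to it — in other words, controlling the ``boundary'' residuals in the interval affected by $n_0$. One needs to argue that whenever the residual falls into a bad small range it can still be finished off in $O(1)$ further vertices (again via corona-graphs or an enlarged finite base case), and that the $h_{\widetilde G}''$ shifts do not accumulate in a way that derails the count; but since $\Gamma$ is finite, all the additive fudge terms ($h_{\widetilde G}''$, the base-case size, the $O(1)$ in Lemma~\ref{lem_recalc_iotamgen}) are uniformly bounded, so they only contribute $O(1)$ to $\nu$ in total. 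A secondary point requiring care is verifying that after attaching $\widetilde G$ the resulting graph is still bipartite without isolated vertices, so that Lemma~\ref{lem_recalc_iotamgen} can be re-applied — this is immediate from the construction since we only add edges between parts of $G$ and of $\widetilde G$.
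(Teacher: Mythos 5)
Your plan is correct and is essentially the paper's own argument: the paper proves the bound by induction on $n$, at each step writing $n'=h_{\widetilde{G}}'k+h_{\widetilde{G}}''$ via the covering hypothesis, realizing $k$ by the induction hypothesis, attaching $\widetilde{G}$ through Lemma~\ref{lem_recalc_iotamgen}, and absorbing the loss through $\nu(\widetilde{G})\le\gamma\log_2 h_{\widetilde{G}}'$, with a finite base family (corona-graphs and small graphs) supplying the $O(1)$ term. Your greedy top-down descent with the per-gadget accounting $\sum a_{\widetilde{G}}\nu(\widetilde{G})\le\gamma\sum a_{\widetilde{G}}\log_2 h_{\widetilde{G}}'\le\gamma\log_2 n+O(1)$ is just this induction unrolled, so the two proofs coincide in substance.
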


\begin{proof}
The lemma is proved by induction on~$n$ with a help of lemma~\ref{lem_recalc_iotamgen}. Let $\Gamma_0$ be an arbitrary finite set of bipartite graphs having $\{\iota_m(G)\mid G\in\Gamma_0\}\supseteq[1,n_0]$. For example, as $\Gamma_0$ we can take the set $\{K_{n-2,n-2}'\mid n\in [4,n_0]\}\cup\{K_1,\,K_{1,1},\,P_4\}$. Let $\nu_0$ be the maximal number of vertices of graphs in $\Gamma_0$. It suffices to prove that for any $n$ the following inequality holds:
\begin{equation}\label{eq_iotamproof1}
L_{\iota_m,\,\nu}^{\mathcal{B}}(n)\le \gamma\cdot \log_2 n+\nu_0,
\end{equation}
which would imply the statement of the lemma.
\par The inequality \eqref{eq_iotamproof1} trivially holds for $n\le n_0$. Consider an arbitrary $n',\,n'>n_0$, and assume that \eqref{eq_iotamproof1}~holds for all $n$ less than $n'$. By the conditions of the lemma, there exists some $\widetilde{G}\in\Gamma$ and some natural~$k$, such that $n'=h_{\widetilde{G}}'k+h_{\widetilde{G}}''$. By the induction hypothesis there is some bipartite $G$ having $\iota_m(G)=k$ and $\nu(G)\le\gamma\cdot \log_2 k+\nu_0$. By lemma~\ref{lem_recalc_iotamgen} we conclude that there is a graph $G'$ with $\iota_m(G')=n'$ and
\begin{equation}\label{eq_iotamproof2}
\nu(G')\le \nu(G)+\nu(\widetilde{G})\le \nu(\widetilde{G})+\gamma\cdot \log_2 k+\nu_0.
\end{equation}
By~\eqref{eq_iotamproof2} and $k\le \frac{n'}{h_{\widetilde{G}}'}$ we have
$$
\begin{array}{rl}
\nu(G')&\le \nu(\widetilde{G})+\gamma\cdot \log_2 n'-\gamma\cdot\log_2 h_{\widetilde{G}}' +\nu_0=\\
 &=\gamma\cdot \log_2 n'+\nu_0 +((\log_2 h_{\widetilde{G}}')^{-1}\nu(\widetilde{G})- \gamma)\cdot\log_2 h_{\widetilde{G}}'\le\\
 &\le \gamma\cdot \log_2 n'+\nu_0.
\end{array}
$$
\end{proof}

\begin{theorem}\label{the_iotamgen}
For all $n\in\mathbb{N}$ we have
\begin{equation}\label{eq_iotamgen}
2\log_2n\le L_{\iota_m,\nu}^{\mathcal{B}}(n)\le 2.88\log_2n + O(1).
\end{equation}
\end{theorem}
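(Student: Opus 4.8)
The lower bound is elementary. For any bipartite $G$ with parts $L_G,R_G$, $|L_G|\le\nu(G)/2$, a maximal independent set $M$ is determined by $A=M\cap L_G$: since $M$ is independent, $M\cap R_G$ avoids $N(A)$, and since $M$ is maximal every vertex of $R_G\setminus N(A)$—being non-adjacent to all of $M$—lies in $M$, so $M=A\cup(R_G\setminus N(A))$. Hence $\iota_m(G)\le 2^{|L_G|}\le 2^{\nu(G)/2}$, i.e. $\nu(G)\ge 2\log_2\iota_m(G)$, whence $L_{\iota_m,\nu}^{\mathcal B}(n)\ge 2\log_2 n$. For the upper bound the plan is to apply Lemma~\ref{lem_main_iotamgen}: it suffices to build a finite family $\Gamma$ of bipartite graphs with marked subsets $U_1,U_2$ of their parts such that the progressions $\{h_{\widetilde G}'k+h_{\widetilde G}''\mid k\in\mathbb N,\ \widetilde G\in\Gamma\}$ cover $\mathbb N\setminus[1,n_0]$ and $(\log_2 h_{\widetilde G}')^{-1}\nu(\widetilde G)\le 2.88$ for every $\widetilde G\in\Gamma$; Lemma~\ref{lem_main_iotamgen} then gives $L_{\iota_m,\nu}^{\mathcal B}(n)\le 2.88\log_2 n+O(1)$. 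To force coverage I would fix a base $B=2^t$ and produce, for each residue $r\in\{0,\dots,B-1\}$, a gadget $\widetilde G_r$ with $h_{\widetilde G_r}'=B$ \emph{exactly} and $h_{\widetilde G_r}''\equiv r\pmod B$; then $Bk+h_{\widetilde G_r}''$ runs over all large integers $\equiv r\pmod B$, so the progressions cover a cofinite set.

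Each $\widetilde G_r$ would consist of a \emph{bulk}—a disjoint union of $t$ independent edges (or, as a variant, short paths), realizing $\iota_m=2^t$ on about $2t$ vertices; this is the vertex-efficient part, since among bipartite graphs $\iota_m$ is maximized by unions of $K_2$'s (a union of $t$ edges gives $\iota_m=2^t$ on $2t$ vertices, ratio exactly $2$)—together with a small \emph{adjustment} subgraph glued to the bulk, all of whose vertices go into $U_1$ (and, if needed, $U_2$). The adjustment is arranged so that $\widetilde G_r\setminus(U_1\cup U_2)$ is still precisely the bulk, which keeps $h_{\widetilde G_r}'=2^t$, while the three quantities $\iota_m(\widetilde G_r\setminus U_1)$, $\iota_m(\widetilde G_r\setminus U_2)$, $\iota_m(\widetilde G_r+U_1+U_2)$ entering Lemma~\ref{lem_recalc_iotamgen} are shifted so as to land $h_{\widetilde G_r}''$ in the class $r$ modulo $2^t$. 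Two kinds of move are available: attaching pendant vertices and short pendant paths to the bulk pieces (which changes $\iota_m$ by small controllable amounts) and adding one or two ``connector'' vertices joined to entire parts of the bulk, as in the sample computation giving $(h',h'')=(2^t,\,2^t+1)$; combined across the $t$ bulk pieces these should realize every residue. If the adjustment costs only $O(t)$ extra vertices with a small enough constant, then $\nu(\widetilde G_r)\le 2.88\log_2 h_{\widetilde G_r}'$, as required; adjoining to $\Gamma$ the finitely many base graphs from the proof of Lemma~\ref{lem_main_iotamgen} handles $[1,n_0]$.

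The crux—and where I expect almost all of the work to lie—is this last design step: making the adjustment subgraphs hit all $2^t$ residues modulo $2^t$ within a budget of fewer than $0.88\,t$ added vertices. A naive scheme spends roughly one extra vertex per bit of $r$ and yields only the constant $3$; getting down to $2.88$ requires extracting more than a bit per adjustment vertex—e.g. by exploiting the several inequivalent attachment positions on a path and the distinct resulting values of $\iota_m$—and, crucially, verifying that the induced map from attachment patterns to residues modulo $2^t$ is genuinely surjective, rather than depending only on the multiset of local contributions (which is what the most obvious attachments produce, e.g. adding vertices to a union of edges leaves $\iota_m$ unchanged). The precise constant $2.88$ should emerge from optimizing this encoding over the choice of $t$ and of the adjustment pieces.
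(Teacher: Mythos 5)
Your lower bound is correct and is the same argument as the paper's (a m.\,i.\,s.\ of a bipartite graph is determined by its trace on one part, so $\iota_m(G)\le 2^{\nu(G)/2}$). The upper bound, however, is not proved: everything beyond the reduction to Lemma~\ref{lem_main_iotamgen} is left as a plan. The entire content of the paper's proof of the upper bound is the exhibition of a concrete finite family $\Gamma$ and the verification of its two properties; you acknowledge this is ``the crux'' and ``where almost all of the work lies,'' but you never construct the adjustment gadgets, never verify that $h'_{\widetilde G}$ stays exactly $2^t$ while $h''_{\widetilde G}$ sweeps all residues modulo $2^t$, and never show the cost stays below $0.88\,t$ added vertices --- indeed you note that the obvious attachments either do not change $\iota_m$ at all or yield only the constant $3$. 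As it stands, the bound $2.88$ is asserted to ``emerge from optimizing this encoding,'' which is not an argument; nothing in the proposal rules out that your single-modulus scheme bottoms out at a constant strictly larger than $2.88$.

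It is also worth noting that the paper does not follow your ``one modulus, all residues'' design, and for a reason. Requiring every residue class modulo $2^t$ to be realized by an equally cheap gadget is the expensive way to achieve coverage. Instead the paper mixes moduli: it takes eight explicit marked bipartite graphs with $(h',h'')$ equal to $(2,0)$, $(3,0)$, $(6,1)$, $(18,5)$, $(18,7)$, $(18,11)$, $(18,13)$, $(18,17)$ (found by computer search). The cheap progressions $2k$, $3k$, $6k+1$ absorb most integers, so only the six residues modulo $18$ that are odd and coprime to $3$ need the larger $12$-vertex gadgets, and the worst ratio is $\gamma=12/\log_2 18\approx 2.879<2.88$; coverage of all sufficiently large $n$ and the value of $\gamma$ are then finite checks. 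To repair your proof you would either have to carry out (and verify) your residue-encoding construction with the claimed $0.88\,t$ vertex budget --- which you have not done and which may not be achievable --- or simply exhibit a concrete small family with the required covering property and ratio, as the paper does.
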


\begin{proof}
The lower bound of~\eqref{eq_iotamgen} follows from the observation that a number of m.\,i.\,s. in a bipartite graphs cannot exceed the number of subsets of any of this graph's parts.
\par To obtain the upper bound we apply lemma~\ref{lem_main_iotamgen} with $\Gamma$ being equal to the following set of graphs (subsets $U_1,\,U_2$ are marked as bold vertices; pairs of numbers $(h_{\widetilde{G}}',\,h_{\widetilde{G}}'')$ are scribed under the graphs):\\
\newcommand{\dcrc}{*[o]{\circ}}
\newcommand{\dblt}{*[o]{\bullet}}
\begin{tabular}{ccc}
\xymatrix{
\dcrc\ar@{-}[d] \\
\dcrc }&\hphantom{XXXXXXXX}&
\xymatrix{
\dcrc\ar@{-}[d] & \dcrc\ar@{-}[dl]\ar@{-}[d] \\
\dcrc & \dcrc } \\

$(2,\,0)$&&$(3,\,0)$ \\

\xymatrix{
\dcrc\ar@{-}[d] & \dblt\ar@{-}[dl]\ar@{-}[d] & \dcrc\ar@{-}[dl]\ar@{-}[d] & \dcrc\ar@{-}[dl] \\
\dcrc & \dcrc & \dcrc}&&
\xymatrix{
\dcrc\ar@{-}[d] & \dblt\ar@{-}[dl]\ar@{-}[d]\ar@{-}[dr] & \dcrc\ar@{-}[d] & \dcrc\ar@{-}[dl]\ar@{-}[d] & \dcrc\ar@{-}[dl]\ar@{-}[d] & \dcrc\ar@{-}[dll]\ar@{-}[d] \\
\dcrc & \dcrc & \dcrc & \dcrc & \dcrc & \dcrc } \\

$(6,\,1)$&&$(18,\,5)$ \\

\xymatrix{
\dcrc\ar@{-}[d] & \dcrc\ar@{-}[dl]\ar@{-}[d] & \dblt\ar@{-}[dl]\ar@{-}[d]\ar@{-}[dr] & \dcrc\ar@{-}[dl]\ar@{-}[dr] & \dcrc\ar@{-}[dl]\ar@{-}[dr] & \dcrc\ar@{-}[d] \\
\dcrc & \dcrc & \dcrc & \dcrc & \dcrc & \dcrc }&&
\xymatrix{
\dcrc\ar@{-}[d]\ar@{-}[dr] & \dblt\ar@{-}[d]\ar@{-}[dr] & \dcrc\ar@{-}[d]\ar@{-}[dr] & \dcrc\ar@{-}[dl]\ar@{-}[dr]\ar@{-}[drr] & \dcrc\ar@{-}[d] & \dcrc\ar@{-}[d] \\
\dcrc & \dcrc & \dcrc & \dcrc & \dcrc & \dcrc }\\

$(18,\,7)$&&$(18,\,11)$ \\

\xymatrix{
\dblt\ar@{-}[d] & \dcrc\ar@{-}[dl]\ar@{-}[d] & \dcrc\ar@{-}[dl] & \dcrc\ar@{-}[dl]\ar@{-}[d] & \dcrc\ar@{-}[dl]\ar@{-}[d] & \dcrc\ar@{-}[dl]\ar@{-}[d] \\
\dcrc & \dcrc & \dcrc & \dblt & \dcrc & \dcrc }&&
\xymatrix{
\dblt\ar@{-}[d] & \dcrc\ar@{-}[dl]\ar@{-}[d] & \dcrc\ar@{-}[dl]\ar@{-}[d]\ar@{-}[dr]\ar@{-}[drr] & \dcrc\ar@{-}[dl] & \dcrc\ar@{-}[dl] & \dcrc\ar@{-}[dl]\ar@{-}[d] \\
\dcrc & \dcrc & \dcrc & \dcrc & \dblt & \dcrc } \\

$(18,\,13)$&&$(18,\,17)$
\end{tabular}

It can be checked, that such $\Gamma$ meets the conditions of lemma~\ref{lem_main_iotamgen} and for this set the parameter $\gamma$ would equal $12(\log_218)^{-1}<2.88$. It implies the lower bound in~\eqref{eq_iotamgen}.
\end{proof}

\begin{remark}
The inequality~\eqref{eq_iotamgen} remains valid without $O(1)$ summand, which can be proven in the same way as in theorem~\ref{the_iotamgen}. Upper bound in~\eqref{eq_iotamgen} may be directly improved by finding a better set~$\Gamma$. To find such $\Gamma$ one can apply an exhaustive computer search (which in fact was used to find $\Gamma$ that we provide above).
\end{remark}

We feel certain that the following is true:
\begin{conjecture}
$L_{\iota_m,\nu}^{\mathcal{B}}(n)\sim 2\log_2n$ for $n\to\infty$.
\end{conjecture}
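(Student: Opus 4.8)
The plan is to prove the matching upper bound $L_{\iota_m,\nu}^{\mathcal B}(n)\le(2+o(1))\log_2n$, which together with the lower bound of Theorem~\ref{the_iotamgen} gives $L_{\iota_m,\nu}^{\mathcal B}(n)\sim2\log_2n$. This cannot be done through Lemma~\ref{lem_main_iotamgen}: if $\gamma(\Gamma)<2+\varepsilon$ with $\varepsilon$ below an absolute constant, every $\widetilde{G}\in\Gamma$ has efficiency $\nu(\widetilde{G})/\log_2 h_{\widetilde{G}}'<2+\varepsilon$, and one checks that such a gadget is either a perfect matching $tK_2$ (whence $h_{\widetilde{G}}''=0$ by Lemma~\ref{lem_recalc_iotamgen}) or, by stability of the extremal bipartite graphs for the number of m.\,i.\,s.\ (a near-extremal graph on $N$ vertices contains an isolated induced matching of size $(1-o(1))N/2$), has $\widetilde{G}\setminus(U_1\cup U_2)$, and therefore also $\widetilde{G}\setminus U_1$, $\widetilde{G}\setminus U_2$ and $\widetilde{G}$, containing an isolated induced matching of positive size; in all cases $h_{\widetilde{G}}'$ and $h_{\widetilde{G}}''$ are both even, so the progressions $\{h_{\widetilde{G}}'k+h_{\widetilde{G}}''\}$ all miss the odd integers and the covering hypothesis fails. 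A genuinely different construction is needed.

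I would build one through formal concept lattices. For a bipartite graph $G$ with parts $X,Y$ one checks directly that the m.\,i.\,s.\ of $G$ are exactly the pairs $(S,T)$ with $S\subseteq X$, $T\subseteq Y$, $S=\{x\in X:xt\notin E_G\text{ for all }t\in T\}$ and $T=\{y\in Y:sy\notin E_G\text{ for all }s\in S\}$, i.e.\ the concepts of the context on $(X,Y)$ whose incidence relation is the complement of $E_G$; hence $\iota_m(G)$ is the number of elements of the associated concept lattice. Conversely every finite lattice $\mathcal L$ is the concept lattice of its standard context, realized by the bipartite graph on parts $J(\mathcal L)$ and $M(\mathcal L)$ (the sets of join-irreducible and meet-irreducible elements) with an edge between $j$ and $m$ whenever $j\not\le m$. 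Since in any context realizing $\mathcal L$ the number of objects is at least $|J(\mathcal L)|$ and the number of attributes at least $|M(\mathcal L)|$, while $|\mathcal L|\le 2^{|J(\mathcal L)|}$ and $|\mathcal L|\le 2^{|M(\mathcal L)|}$, we obtain the identity
$$L_{\iota_m,\nu}^{\mathcal B}(n)=\min\bigl\{\,|J(\mathcal L)|+|M(\mathcal L)|\ :\ \mathcal L\text{ a finite lattice},\ |\mathcal L|=n\,\bigr\}\ \ge\ 2\log_2 n,$$
which re-proves the lower bound with no $O(1)$ term and reduces the conjecture to the statement: \emph{every $n$ is the size of a finite lattice whose sets of join-irreducibles and of meet-irreducibles both have size $(1+o(1))\log_2n$.}

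It would suffice to treat distributive lattices: the ideal lattice of a finite poset $P$ is distributive, has $|J|=|M|=|P|$, and has as many elements as $P$ has order ideals (equivalently, antichains), so the conjecture would follow from showing that every $n$ is the number of antichains of a poset on $(1+o(1))\log_2n$ elements — equivalently, the number of independent sets of a comparability graph on $(1+o(1))\log_2n$ vertices, a question closely related to the problems on $\iota$ recalled in the introduction. The plan for that is greedy: start from an antichain of size $k=\lceil\log_2n\rceil$, which has $2^k$ antichains, and adjoin $o(k)$ further elements and relations so as to bring the count down to exactly $n$, keeping track of the number of antichains at each step. I expect this last step to be the main obstacle. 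A single added relation changes the count by a macroscopic amount — adjoining one relation to an antichain of size $k$ lowers it from $2^k$ to $3\cdot2^{k-2}$ — so landing on $n$ exactly requires a sufficiently rich repertoire of local modifications of $P$ whose effects on the count are relatively prime in the relevant sense, i.e.\ a ``smoothing'' step able to decrease the count by $1$ (or by any prescribed small amount) near the end. The hardest instances are the $n$ with no useful multiplicative structure, such as primes and $n=2^k-1$, for which $P$ cannot be split into a large antichain and a small corrector; and since any bound $L_{\iota_m,\nu}^{\mathcal B}(n)\le2\log_2n+o(\log n)$ — e.g.\ with error $O(\log\log n)$ — already yields the conjecture, an only approximately sharp smoothing step is enough.
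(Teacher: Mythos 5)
This statement is an open conjecture in the paper: the author proves it only for the special class of $n$ whose binary representation is a concatenation of a few periodic words (Theorem~\ref{the_iotamser}, via Lemmas~\ref{lem_iotamser_simple}--\ref{lem_iotamser_addtail}), so there is no proof of the full statement for you to match. Your reformulation through formal concept analysis is correct and genuinely different from anything in the paper: maximal independent sets of a bipartite graph with parts $X,Y$ are exactly the concepts of the context $(X,Y,\overline{E_G})$, every finite lattice is realized by the standard context on $J(\mathcal L)\cup M(\mathcal L)$, and any realizing context needs at least $|J(\mathcal L)|$ objects and $|M(\mathcal L)|$ attributes; hence the identity $L_{\iota_m,\nu}^{\mathcal B}(n)=\min\{|J(\mathcal L)|+|M(\mathcal L)|:|\mathcal L|=n\}$ and the clean lower bound are sound, and this is an attractive lens on the problem.

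However, the proposal does not prove the conjecture: the entire difficulty has been relocated, not resolved. The step you yourself flag as the main obstacle --- that every $n$ is the number of antichains of a poset on $(1+o(1))\log_2 n$ elements (equivalently, of independent sets of a comparability graph of that size) --- is left completely unproven, and it is at least as hard as the conjecture restricted to distributive lattices; it is of the same nature as the inverse problems for $\iota$ cited in the introduction (Linek's construction gives only $\approx 2\log_2 n$ poset elements, i.e.\ $\approx 4\log_2 n$ vertices here, and no known ``smoothing'' repertoire lets one hit an arbitrary $n$, e.g.\ a prime or $2^k-1$, with only $o(\log n)$ corrector elements). The greedy plan offers no concrete mechanism for adjusting the antichain count by $\pm 1$ at cost $o(\log n)$, which is precisely where the paper, facing the same wall, retreats to the special class of $n$ treated in Theorem~\ref{the_iotamser}. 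As a secondary point, your opening claim that Lemma~\ref{lem_main_iotamgen} can never reach $2+\varepsilon$ is also not rigorous as stated: stability gives a near-perfect matching inside $\widetilde{G}\setminus(U_1\cup U_2)$, but those matching edges need not be isolated from $U_1\cup U_2$ in $\widetilde{G}$, and without that the parity argument for $h_{\widetilde{G}}''$ does not follow. So the contribution is a nice equivalent reformulation and a re-proof of the lower bound, but the conjecture remains open under your approach.
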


Thought we were unable to prove the above conjecture, theorem~\ref{the_iotamser} approves it for some special class of naturals. Next we need to prove some auxillary statements.

\begin{lemma} \label{lem_iotamser_simple}
For any bipartite $G$ without isolated vertices there is a bipartite graph without isolated vertices having  $(\nu(G)+4)$ vertices and $(2\iota_m(G)+1)$ maximal independent sets.
\end{lemma}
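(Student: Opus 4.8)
The plan is to apply Lemma~\ref{lem_recalc_iotamgen} with a fixed four-vertex gadget $\widetilde{G}$, chosen so that the resulting graph $G'$ has $\nu(G')=\nu(G)+4$ vertices and $\iota_m(G')=2\iota_m(G)+1$ maximal independent sets. Writing $k=\iota_m(G)$, the identity of Lemma~\ref{lem_recalc_iotamgen} reads $\iota_m(G')=h_{\widetilde G}'k+h_{\widetilde G}''$, so it is enough to exhibit a four-vertex bipartite $\widetilde G$ (with selected subsets $U_1,U_2$ of its parts) realizing $h_{\widetilde G}'=2$ and $h_{\widetilde G}''=1$.

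Concretely, I would take $\widetilde G$ to be the path $P_4$ on vertices $a_1-b_1-a_2-b_2$, with first part $\{a_1,a_2\}$ and second part $\{b_1,b_2\}$, and put $U_1=\{a_1\}$, $U_2=\{b_2\}$, i.e.\ the two endpoints of the path. Then a direct inspection gives: $\widetilde G\setminus(U_1\cup U_2)$ is the single edge $b_1a_2$, so $h_{\widetilde G}'=\iota_m(\widetilde G\setminus(U_1\cup U_2))=\iota_m(K_2)=2$; both $\widetilde G\setminus U_1$ and $\widetilde G\setminus U_2$ are copies of $P_3$, each contributing $\iota_m(P_3)=2$; and the unique maximal independent set of $P_4$ meeting both endpoints is $\{a_1,b_2\}$, so $\iota_m(\widetilde G+U_1+U_2)=1$. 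Substituting these into Lemma~\ref{lem_recalc_iotamgen} gives
\[
\iota_m(G')=(k-2)\cdot 2+2+2+1=2k+1,
\]
while by construction $\nu(G')=\nu(G)+\nu(P_4)=\nu(G)+4$.

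Finally I would verify the two remaining requirements. The graph $G'$ is bipartite, with parts $L_G\cup\{b_1,b_2\}$ and $R_G\cup\{a_1,a_2\}$. It has no isolated vertices: since $G$ has none, both $L_G$ and $R_G$ are nonempty, so $a_1$ (joined to all of $L_G$) and $b_2$ (joined to all of $R_G$) acquire neighbors, $b_1$ and $a_2$ remain adjacent inside the gadget, and every vertex of $G$ keeps its original neighbors. There is no genuine obstacle in this argument — the only thing to get right is the choice of the four-vertex gadget together with the four small maximal-independent-set counts that feed into Lemma~\ref{lem_recalc_iotamgen}; running through the few bipartite graphs on four vertices with all admissible choices of $U_1,U_2$, the path $P_4$ with its two endpoints selected is the one that produces the pair $(h_{\widetilde G}',h_{\widetilde G}'')=(2,1)$.
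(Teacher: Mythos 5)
Your proof is correct and takes essentially the same route as the paper: both apply Lemma~\ref{lem_recalc_iotamgen} with the gadget $\widetilde{G}=P_4$, differing only in the selected subsets (you take the two endpoints, the paper takes one central vertex together with the empty set), and either choice yields $(h_{\widetilde{G}}',h_{\widetilde{G}}'')=(2,1)$, hence $2\iota_m(G)+1$ maximal independent sets on $\nu(G)+4$ vertices. Your verification of the small counts, the bipartition of $G'$, and the absence of isolated vertices is accurate; only your closing remark that the endpoint choice is \emph{the} one producing $(2,1)$ overstates uniqueness, since the paper's choice produces the same pair.
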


\begin{proof}
Apply lemma~\ref{lem_recalc_iotamgen}, taking $P_4$ for $\widetilde{G}$, and taking any central vertex of $\widetilde{G}$ and empty set for $U_1$ and $U_2$ respectively.
\end{proof}

\begin{lemma} \label{lem_iotamser_simple2}
For any bipartite $G$ without isolated vertices there is a bipartite graph without isolated vertices having  $(\nu(G)+4)$ vertices and $(\iota_m(G)+2)$ maximal independent sets.
\end{lemma}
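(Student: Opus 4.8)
The plan is to mimic the proof of Lemma~\ref{lem_iotamser_simple}, invoking Lemma~\ref{lem_recalc_iotamgen} with $\widetilde G=P_4$ but choosing the subsets $U_1,U_2$ differently so that the recursion produces the linear form $\iota_m(G)+2$ rather than $2\iota_m(G)+1$. Label the vertices of $P_4$ as $a_1-b_1-a_2-b_2$, with $\{a_1,a_2\}$ the first part and $\{b_1,b_2\}$ the second part. I would take $U_1=\{a_1\}$ (an endpoint of the path) and $U_2=\{b_1\}$ (the internal vertex adjacent to $a_1$). Since $P_4$ has no isolated vertices, neither does the graph $G'$ obtained by joining $U_1$ to $L_G$ and $U_2$ to $R_G$, and clearly $\nu(G')=\nu(G)+4$.

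It remains to compute the four quantities appearing in Lemma~\ref{lem_recalc_iotamgen} for this choice. First, $\widetilde G\setminus(U_1\cup U_2)$ is the path $a_2-b_2$ (an edge), so $h'=\iota_m(\widetilde G\setminus(U_1\cup U_2))=\iota_m(P_2)=2$ — wait, that gives the wrong leading coefficient, so instead I would take $U_1$ to be a set whose removal leaves $P_4$ with $\iota_m=1$; the right choice is $U_1$ and $U_2$ both contained in one edge's worth of vertices so that the ``$\setminus(U_1\cup U_2)$'' graph is a single edge with one endpoint removed, i.e.\ has $\iota_m=1$. Concretely, take $U_1=\{a_2\}$ (the interior vertex in the first part) and $U_2=\{b_1\}$ (the interior vertex in the second part); these two vertices are adjacent. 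Then $\widetilde G\setminus(U_1\cup U_2)$ is the two-edge-less graph on $\{a_1,b_2\}$, namely two isolated vertices, with unique m.i.s.\ $\{a_1,b_2\}$, so $h'=1$. Next $\widetilde G\setminus U_1$ is the path $a_1-b_1-b_2$ with the edge $a_1b_1$ and $b_1b_2$, i.e.\ $P_3$, having $\iota_m(P_3)=2$; by symmetry $\iota_m(\widetilde G\setminus U_2)=2$; and $\iota_m(\widetilde G+U_1+U_2)$ counts m.i.s.\ of $P_4$ meeting both $\{a_2\}$ and $\{b_1\}$, but $a_2b_1$ is an edge so no independent set contains both, giving $0$. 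Hence Lemma~\ref{lem_recalc_iotamgen} yields $\iota_m(G')=(\iota_m(G)-2)\cdot 1+2+2+0=\iota_m(G)+2$, as desired.

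The one technical point I would double-check is the edge cases in Lemma~\ref{lem_recalc_iotamgen}: the lemma requires $G$ to have no isolated vertices (satisfied by hypothesis), and one should confirm the formula still applies when $U_1,U_2$ are singletons and when the relevant induced subgraphs of $\widetilde G$ are edgeless. Since Lemma~\ref{lem_recalc_iotamgen}'s proof is by direct case analysis on where an m.i.s.\ of $G'$ meets $G$, nothing there breaks; the only subtlety is that ``$\iota_m$ of an edgeless graph on $r$ vertices'' equals $1$ (the whole vertex set), which is exactly what makes $h'=1$. I do not anticipate a genuine obstacle here — the lemma is essentially a corollary of Lemma~\ref{lem_recalc_iotamgen} with a well-chosen small gadget, and the main ``work'' is just picking $U_1,U_2$ so that $h'=1$ and the additive constant comes out to $2$; verifying the four counts for $P_4$ is a finite check.
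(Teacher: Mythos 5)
Your proof is correct and takes essentially the same route as the paper: both apply Lemma~\ref{lem_recalc_iotamgen} with $\widetilde G=P_4$, the paper choosing $U_1=\{a_1,a_2\}$ (the two non-adjacent vertices of one part) and $U_2=\emptyset$, while you choose the two adjacent middle vertices, and either choice gives $(\iota_m(G)-2)\cdot 1+\text{(terms summing to 4)}=\iota_m(G)+2$ on $\nu(G)+4$ vertices. One harmless slip: $P_4\setminus\{a_2\}$ is an edge plus an isolated vertex rather than $P_3$, but its number of maximal independent sets is still $2$, so your final count is unaffected.
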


\begin{proof}
Apply lemma~\ref{lem_recalc_iotamgen}, taking $P_4$ for $\widetilde{G}$, and taking pair of non-adjacent vertices of $\widetilde{G}$ and an empty set for $U_1$ and $U_2$ respectively.
\end{proof}

\begin{lemma} \label{lem_iotamser_simple3}
For any bipartite graphs $G$ and $\widetilde{G}$ without isolated vertices there is a bipartite graph without isolated vetices with $(\nu(G)+\nu(\widetilde{G})+4)$ vertices and $(\iota_m(G)+\iota_m(\widetilde{G}))$ maximal independent sets.
\end{lemma}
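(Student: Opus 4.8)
The plan is to glue $G$ and $\widetilde{G}$ together with a single application of Lemma~\ref{lem_recalc_iotamgen}, and then to repair the resulting count with one application of Lemma~\ref{lem_iotamser_simple2}.

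First I would apply Lemma~\ref{lem_recalc_iotamgen} with the given graphs $G$ and $\widetilde{G}$ themselves playing the roles of $G$ and $\widetilde{G}$ in that lemma, and with $U_1$ and $U_2$ taken to be the \emph{whole} first and second parts of $\widetilde{G}$. The point of this choice is that it trivialises every correction term in the gluing formula: $\widetilde{G}\setminus(U_1\cup U_2)$ is the empty graph, hence $\iota_m(\widetilde{G}\setminus(U_1\cup U_2))=1$; each of $\widetilde{G}\setminus U_1$ and $\widetilde{G}\setminus U_2$ is one of the (edgeless) parts of $\widetilde{G}$, hence contributes $1$; and, since $\widetilde{G}$ has no isolated vertices, its two parts are themselves m.\,i.\,s., so every remaining m.\,i.\,s. of $\widetilde{G}$ meets both parts and $\iota_m(\widetilde{G}+U_1+U_2)=\iota_m(\widetilde{G})-2$. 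Plugging these into Lemma~\ref{lem_recalc_iotamgen} yields a bipartite graph $H$ with $\nu(H)=\nu(G)+\nu(\widetilde{G})$ and $\iota_m(H)=(\iota_m(G)-2)+1+1+(\iota_m(\widetilde{G})-2)=\iota_m(G)+\iota_m(\widetilde{G})-2$. I would also check that $H$ has no isolated vertices: the vertices coming from $G$ keep their $G$-edges, and every vertex coming from $\widetilde{G}$ lies in $U_1\cup U_2$ and so becomes joined to all of $L_G$ or to all of $R_G$.

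Finally I would apply Lemma~\ref{lem_iotamser_simple2} to $H$. This adds $4$ vertices and raises the number of m.\,i.\,s. by exactly $2$, producing a bipartite graph without isolated vertices on $\nu(G)+\nu(\widetilde{G})+4$ vertices with $\iota_m(G)+\iota_m(\widetilde{G})$ maximal independent sets, as required. One could equivalently apply Lemma~\ref{lem_iotamser_simple2} to $\widetilde{G}$ first and then glue; that is the same argument.

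There is no genuine obstacle here: the argument is pure bookkeeping once one makes the key observation that taking $U_1,U_2$ equal to the two parts of $\widetilde{G}$ collapses the gluing formula to the clean expression $\iota_m(G)+\iota_m(\widetilde{G})-2$, whose $-2$ defect is then absorbed by Lemma~\ref{lem_iotamser_simple2}. The only care needed is with the degenerate conventions inside Lemma~\ref{lem_recalc_iotamgen} (the empty graph has the single, empty m.\,i.\,s.) and with checking the no-isolated-vertices hypothesis before invoking Lemma~\ref{lem_iotamser_simple2}.
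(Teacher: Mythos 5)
Your proposal is correct and is exactly the paper's argument: glue via Lemma~\ref{lem_recalc_iotamgen} with $U_1,U_2$ the full parts of $\widetilde{G}$ to get $\iota_m(G)+\iota_m(\widetilde{G})-2$ m.\,i.\,s. on $\nu(G)+\nu(\widetilde{G})$ vertices, then fix the $-2$ with Lemma~\ref{lem_iotamser_simple2}. Your bookkeeping (where the $+4$ vertices enter) and the checks of the degenerate cases are if anything slightly more careful than the paper's own wording.
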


\begin{proof}
Apply lemma~\ref{lem_recalc_iotamgen}, with whole parts of $\widetilde{G}$ selected as $U_1$ and $U_2$. Thus we obtain $G'$ on $(\nu(G)+\nu(\widetilde{G})+4)$ vertices with $(\iota_m(G)+\iota_m(\widetilde{G})-2)$ maximal independent sets. It suffices to apply lemma~\ref{lem_iotamser_simple2} to~$G'$.
\end{proof}

\begin{lemma}\label{lem_main_iotamser}
Let $G$ and $\widetilde{G}$ be bipartite without isolated vertices, and let $s,t\in{\mathbb N}$. Then there exists bipartite graph without isolated vertices having $$2^{st}\cdot\iota_m(G)+\frac{2^{st}-1}{2^t-1}\cdot\iota_m(\widetilde{G})$$ maximal independent sets and no more than $\nu(G)+\nu(\widetilde{G})+2s(t+1)+3$ vertices.
\end{lemma}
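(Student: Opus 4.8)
The plan is to obtain the required graph in a single application of Lemma~\ref{lem_recalc_iotamgen}, taking $G$ itself for the base graph and a purpose-built bipartite graph $\widetilde H$ (together with chosen subsets $U_1,U_2$ of its two parts) for the graph called ``$\widetilde G$'' there. By that lemma and the definitions of the quantities $h',h''$, the resulting graph $G'$ has $\iota_m(G')=h'_{\widetilde H}\cdot\iota_m(G)+h''_{\widetilde H}$ and $\nu(G')=\nu(G)+\nu(\widetilde H)$, and $G'$ is bipartite with no isolated vertices. So everything reduces to producing a bipartite $\widetilde H$ with marked subsets such that
$$
h'_{\widetilde H}=2^{st},\qquad h''_{\widetilde H}=\frac{2^{st}-1}{2^t-1}\,\iota_m(\widetilde G),\qquad \nu(\widetilde H)\le\nu(\widetilde G)+2s(t+1)+3 .
$$
The ``obvious'' route — iterating the Horner step $\psi_k=2^t\psi_{k-1}+\iota_m(\widetilde G)$ ($t$ doublings followed by an addition of a copy of $\widetilde G$ via Lemma~\ref{lem_iotamser_simple3}) a total of $s$ times — gives the right value but violates the vertex bound, since it uses a fresh copy of $\widetilde G$ at each of the $s$ steps. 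The whole point of the construction below is that one copy of $\widetilde G$, wired into a ``ladder'' of matchings, is made to contribute at all $s$ scales simultaneously, and that the $st$ matching edges of the ladder double as the $st$ ``doublings'' that scale $\iota_m(G)$.

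Concretely, I would build $\widetilde H$ from $\widetilde G$, with parts $\widetilde L,\widetilde R$, by attaching a ladder of $s$ blocks $B_1,\dots,B_s$. Each block $B_k$ is a matching $p_{k,1}q_{k,1},\dots,p_{k,t}q_{k,t}$ (the $p$'s in the left part, the $q$'s in the right part) together with two junction vertices $x_k$ (left) and $y_k$ (right); inside $B_k$ one joins $x_k$ to all of $q_{k,1},\dots,q_{k,t}$ and $y_k$ to all of $p_{k,1},\dots,p_{k,t}$, so that $B_k\cup\{x_k,y_k\}$ is a ``star over a matching''. Consecutive blocks are linked through the edges $y_kx_{k+1}$, and $\widetilde G$ is hung on the far end of the ladder by joining $y_s$ to all of $\widetilde L$; three further vertices at the ends are available for whatever terminal adjustment the bookkeeping needs. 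One then takes $U_1:=\widetilde L\cup\{x_1,\dots,x_s\}$ and $U_2:=\widetilde R\cup\{y_1,\dots,y_s\}$. These lie in the two parts of $\widetilde H$ and, being taken from already-present vertices together with the junctions, cost nothing beyond the $2st$ matching vertices, the $2s$ junction vertices and the $\le 3$ terminal vertices, so $\nu(\widetilde H)\le\nu(\widetilde G)+2st+2s+3=\nu(\widetilde G)+2s(t+1)+3$. Every vertex added along the way acquires an edge, so $G'$ has no isolated vertices, as required.

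The three counts are then checked by direct enumeration of maximal independent sets, exactly in the style of the proof of Lemma~\ref{lem_recalc_iotamgen}. Deleting $U_1\cup U_2$ kills $\widetilde G$ and every junction and leaves precisely the $st$ independent edges of the matchings, so $h'_{\widetilde H}=\iota_m(\widetilde H\setminus(U_1\cup U_2))=2^{st}$. Deleting only $U_1$ (respectively only $U_2$) disconnects the ladder and leaves each $B_k$ as a star over a matching with $y_k$ (respectively $x_k$) still present; such a graph has exactly $2^t$ maximal independent sets, while the surviving part of $\widetilde G$ is edgeless, so $\iota_m(\widetilde H\setminus U_1)=\iota_m(\widetilde H\setminus U_2)=2^{st}$. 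Finally, for a maximal independent set of $\widetilde H$ that meets both $U_1$ and $U_2$ the ``forcing'' initiated at the junctions propagates along the ladder until it is absorbed, at some block $B_j$; when it is absorbed at $B_j$ the blocks $B_1,\dots,B_{j-1}$ stay free ($2^{(j-1)t}$ choices) and $\widetilde G$ stays free ($\iota_m(\widetilde G)$ choices), and summing over $j=1,\dots,s$ gives
$$
\iota_m(\widetilde H+U_1+U_2)=\iota_m(\widetilde G)\sum_{j=1}^{s}2^{(j-1)t}=\frac{2^{st}-1}{2^t-1}\,\iota_m(\widetilde G).
$$
Substituting into $h''_{\widetilde H}=\iota_m(\widetilde H\setminus U_1)+\iota_m(\widetilde H\setminus U_2)+\iota_m(\widetilde H+U_1+U_2)-2h'_{\widetilde H}$ yields exactly $\frac{2^{st}-1}{2^t-1}\iota_m(\widetilde G)$, and Lemma~\ref{lem_recalc_iotamgen} then finishes the proof.

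The real work is the last count. The junctions, and above all the precise way $\widetilde G$ is glued to the end of the ladder, must be arranged so that (i) the propagation of forcing reproduces the Horner recursion exactly — in particular, when the forcing stops short of $\widetilde G$, the free part of $\widetilde G$ must contribute the full $\iota_m(\widetilde G)$ rather than $\iota_m(\widetilde G)$ with the two ``one-sided'' maximal independent sets of $\widetilde G$ deleted, which is why two junction vertices per block (and the three spare end vertices) are used, and not one; (ii) there is no spurious ``all blocks free'' configuration meeting both $U_1$ and $U_2$, which would add an unwanted $2^{st}$ to $h''_{\widetilde H}$; and (iii) the graph stays bipartite throughout. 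Pinning down the block and the terminal gadget so that all three of these hold at once, while keeping the number of added vertices at $2s(t+1)+3$, is the only genuine content; once that is done the verification is a routine, if somewhat lengthy, case analysis.
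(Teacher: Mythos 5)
Your opening reduction is sound: by Lemma~\ref{lem_recalc_iotamgen} it indeed suffices to produce a bipartite $\widetilde H$ with marked sets $U_1,U_2$ such that $h'_{\widetilde H}=2^{st}$, $h''_{\widetilde H}=\frac{2^{st}-1}{2^t-1}\iota_m(\widetilde G)$ and $\nu(\widetilde H)\le\nu(\widetilde G)+2s(t+1)+3$, and this is essentially how the paper's own construction can be read. The gap is that the gadget you describe does not have the claimed $h''$, and the failure is structural rather than a matter of the three unspecified ``terminal'' vertices. Because $U_1\supseteq\widetilde L$ and $U_2\supseteq\widetilde R$, every m.i.s. of $\widetilde H$ whose trace on $\widetilde G$ meets both $\widetilde L$ and $\widetilde R$ automatically meets both $U_1$ and $U_2$; and since the only connection between $\widetilde G$ and your ladder is the star $y_s\widetilde L$, such a trace leaves the ladder matchings essentially free (each edge $p_{k,j}q_{k,j}$ still just needs one endpoint). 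Hence $\iota_m(\widetilde H+U_1+U_2)$ contains a term of order $(\iota_m(\widetilde G)-2)\cdot 2^{st}$ which your ``forcing propagation'' enumeration omits, while the target value $\frac{2^{st}-1}{2^t-1}\iota_m(\widetilde G)\approx 2^{(s-1)t}\iota_m(\widetilde G)$ is smaller by roughly a factor $2^t$. Concretely, for $s=1$ your graph gives $\iota_m(\widetilde H+U_1+U_2)=2^t\iota_m(\widetilde G)-2^{t+1}+2$ instead of $\iota_m(\widetilde G)$: take $\widetilde G=P_4$ with $\widetilde L=\{a_1,a_2\}$, $\widetilde R=\{b_1,b_2\}$, edges $a_1b_1,a_1b_2,a_2b_2$, and $t=2$; then e.g. $\{b_1,a_2,p_1,q_2\}$ is a maximal independent set meeting both $U_1$ and $U_2$ but not of your ``forced'' form, and the correct count is $6$, not $3$. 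No three-vertex terminal gadget can delete these sets, since they arise as free products of two-sided m.i.s. of $\widetilde G$ with ladder configurations; so your point (ii) fails in an essential way, and the one genuinely hard step (which you yourself flag as ``the only genuine content'') is exactly what is missing. Your computations of $h'_{\widetilde H}=2^{st}$ and $\iota_m(\widetilde H\setminus U_1)=\iota_m(\widetilde H\setminus U_2)=2^{st}$ are correct.

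For comparison, the paper circumvents precisely this obstruction by a different wiring: the parts of $\widetilde G$ are joined \emph{crosswise} to $G$ (i.e., the marked sets contain $R_{\widetilde G}$ and $L_{\widetilde G}$ respectively, together with the vertex $w$ and the top row of an auxiliary gadget), so that any m.i.s. using both parts of $G$ excludes all of $V_{\widetilde G}$ and meets only $st$ independent matching edges, which is where the factor $2^{st}$ comes from; and the geometric series $\frac{2^{st}-1}{2^t-1}$ is produced not by a chain with propagating forcing but by a ``staircase'' graph $\hat G$ on the vertices $u_{i,j},v_{i,j}$ ($i\le s-1$, $j\le t+1$) with $u_{i,j}$ adjacent to $v_{k,t+1}$ for $i\le k$, whose own m.i.s. count is $\sum_{k=0}^{s-1}2^{kt}$; a final application of Lemma~\ref{lem_iotamser_simple2} adjusts the additive constant. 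Until you exhibit a gadget with the three required values of $h'$, $h''$ and $\nu$ (or adopt something like the paper's crosswise-plus-staircase construction), the key count in your proof is unproven, and it is false for the gadget as you specified it.
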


\begin{proof}
For $s=1$ the statement follows from lemma~\ref{lem_iotamser_simple3} (before applying the lemma add matching on $2t$ vertices to $G$). So for the rest of the proof we assume that $s\ge2$. We also assume that $V_G\cap V_{\widetilde{G}}=\emptyset$. Parts of $G$ and $\widetilde{G}$ will be denoted as $L_G,\,R_G$ and $L_{\widetilde{G}},\,R_{\widetilde{G}}$ respectively. We shall consider a graph $G'$ which is constructed as follows:
$$\begin{array}{rl}
V_{G'}=&V_G\cup V_{\widetilde{G}}\cup \{w\}\cup \{\widetilde{u}_i\mid 1\le i\le t\}\cup\{\widetilde{v}_i\mid 1\le i\le t\}\cup\smallskip\\
&\cup \{u_{i,j}\mid 1\le i\le s-1,\,1\le j\le t+1\}\cup \{v_{i,j}\mid 1\le i\le s-1,\,1\le j\le t+1\},\medskip\\
E_{G'}=&E_G\cup E_{\widetilde{G}}\cup \{\widetilde{u}_i\widetilde{v}_i\mid 1\le i\le t\}\cup \{u_{i,j}v_{i,j}\mid 1\le i\le s-1,\,1\le j\le t\}\cup \smallskip\\
&\cup \{uv\mid u\in L_G,\,v\in R_{\widetilde{G}}\}\cup \{uv\mid u\in R_G,\,v\in L_{\widetilde{G}}\}\cup \{wv\mid v\in R_G\cup R_{\widetilde{G}}\}\cup \smallskip\\
&\cup\{\widetilde{u}_iv\mid 1\le i\le t,\,v\in R_{\widetilde{G}}\}\cup \{u\widetilde{v}_i\mid 1\le i\le t,\,u\in L_{\widetilde{G}}\} \cup\smallskip\\
&\cup \{u_{i,t+1}v\mid 1\le i\le s-1,\,v\in R_G\}\cup \{uv_{i,t+1}\mid 1\le i\le s-1,\,u\in L_G\}\cup\smallskip\\
&\cup \{u_{i,j}v_{k,t+1}\mid 1\le i\le k\le s-1,\,1\le j\le t+1\}.
\end{array}
$$
It can be checked that $G'$ is bipartite with one of its parts being
$$L_{G'}=L_G\cup L_{\widetilde{G}}\cup \{w\}\cup \{\widetilde{u}_i\mid 1\le i\le t\}\cup \{u_{i,j}\mid 1\le i\le s-1,\,1\le j\le t+1\}.$$
We now count all maximal independent sets in $G'$. These can be of the following seven types:
\begin{enumerate}
\item Let $\mathcal{I}_1=\{I\in \mathcal{I}_m(G')\mid I\cap L_{G}\neq\emptyset,\,I\cap R_{G}\neq\emptyset \}$. Note that for every set $I\in\mathcal{I}_1$ the subset $I\cap V_G$ is m.\,i.\,s. in $G$, and that the intersection of $V_{G'}\setminus V_G$ and $I$ can only contain the following vertices: $\widetilde{u}_i$, $\widetilde{v}_i$, $u_{i,j}$ and $v_{i,j}$ for $j\neq t+1$. The subgraph generated by these vertices is a matching, which implies
    \begin{equation}\label{eq_iotam_1}
    |\mathcal{I}_1|=(\iota_m(G)-2)\cdot 2^{st}.
    \end{equation}
\item Let $\mathcal{I}_2=\{I\in \mathcal{I}_m(G')\mid I\cap L_{G}\neq\emptyset,\,I\cap R_{G}=\emptyset \}$. It can be checked that every $I\in\mathcal{I}_2$ must contain \emph{all} vertices from $L_G$, and also vertices $w$ and $u_{i,t+1}$ for all $i$. Moreover $I$ is disjoint with $R_{\widetilde{G}}$. The rest of the vertices in $I$ form a maximal independent set in subgraph, generated by the set $$L_{\widetilde{G}}\cup \{\widetilde{u}_i\mid i\le t\}\cup\{\widetilde{v}_i\mid i\le t\}\cup \{u_{i,j}\mid i\le s-1,\,j\le t\}\cup \{v_{i,j}\mid i\le s-1,\,j\le t+1\}.$$ The number of the latter is $2^{st}$, so we have
     \begin{equation}\label{eq_iotam_2}
    |\mathcal{I}_2|=2^{st}.
    \end{equation}
\item Let $\mathcal{I}_3=\{I\in \mathcal{I}_m(G')\mid I\cap L_{G}=\emptyset,\,I\cap R_{G}\neq\emptyset \}$. As in the previous case we have $|\mathcal{I}_3|=2^{st}$. With~\eqref{eq_iotam_1} and~\eqref{eq_iotam_2} it gives us
    \begin{equation}\label{eq_iotam_3}
    |\mathcal{I}_1|+|\mathcal{I}_2|+|\mathcal{I}_3|=\iota_m(G)\cdot 2^{st}.
    \end{equation}
\item We now turn to counting those m.\,i.\,s. of $G'$ that contain no vertices of $V_G$. We use the notation $\mathcal{I}_{\overline{G}}=\{I\in \mathcal{I}_m(G')\mid I\cap V_G=\emptyset\}$.
    \par Let $\hat{G}$ be a subgraph of $G$ generated by vertices $u_{i,j}$ and $v_{i,j}$, $1\le i\le s-1,\,1\le j\le t+1$. For what follows it is useful to calculate $\iota_m(\hat{G})$. The number $\hat{\iota}_0$ of m.\,i.\,s. of $\hat{G}$ which do not contain any $v_{i,t+1}$ equals to $2^{(s-1)t}$ (that is the number of m.\,i.\,s. in a matching with $(s-1)t$ edges). Next consider an arbitrary $k,\,1\le k\le s-1$. Let us count the number $\hat{\iota}_k$ of those maximal independent sets $\hat{I}$ in $\hat{G}$, that contain $v_{k,t+1}$ but do not contain any of $v_{i,t+1}$ for $i>k$. For such $\hat{I}$ we have $\hat{I}\not\ni u_{i,j}$ and $\hat{I}\ni v_{i,j}$ for all $i<k$ and for all $j$. Moreover, for such $\hat{I}$ we have $\hat{I}\ni u_{i,t+1}$ for $i>k$, and the rest of vertices in $\hat{I}$ form a maximal independent set in a matching $\{u_{i,j}v_{i,j}\mid k<i\le s-1,\,1\le j\le t\}$. From what was mentioned it follows that $\hat{\iota}_k=2^{(s-1-k)t}$. Finally we have
    \begin{equation}\label{eq_iotam_hatG}
    \iota_m(\hat{G})=\sum_{k=0}^{s-1}\hat{\iota}_k=\sum_{k=0}^{s-1}2^{(s-1-k)t}=\frac{2^{st}-1}{2^t-1}.
    \end{equation}
    \begin{enumerate}
    \item Let $\mathcal{I}_4=\{I\in \mathcal{I}_{\overline{G}}\mid I\cap L_{\widetilde{G}}\neq\emptyset,\,I\cap R_{\widetilde{G}}\neq\emptyset \}$. For $I\in\mathcal{I}_4$ the subset $I\cap V_{\widetilde{G}}$ is a m.\,i.\,s. in $\widetilde{G}$, and the set $I\cap (V_G\setminus V_{\widetilde{G}})$ is a m.\,i.\,s. in $\hat{G}$. So we get
        \begin{equation}\label{eq_iotam_4}
        |\mathcal{I}_4|=(\iota_m(\widetilde{G})-2)\cdot \iota_m(\hat{G}).
        \end{equation}
    \item Let $\mathcal{I}_5=\{I\in \mathcal{I}_{\overline{G}}\mid I\cap L_{\widetilde{G}}\neq\emptyset,\,I\cap R_{\widetilde{G}}=\emptyset\}$. Every  $I\in\mathcal{I}_5$ contains all vertices of $L_{\widetilde{G}}$ and every $\widetilde{u}_i$. Moreover, such $I$ would not contain $w$ and any $\widetilde{v}_i$. Note that $I\cap V_{\hat{G}}$ is a m.\,i.\,s. in $I\cap V_{\hat{G}}$ and should contain at least one of the vertices $v_{i,t+1}$. It implies
        \begin{equation}\label{eq_iotam_5}
        \mathcal{I}_5=\iota_m(\hat{G})-2^{(s-1)t}.
        \end{equation}
    \item Let $\mathcal{I}_6=\{I\in \mathcal{I}_{\overline{G}}\mid I\cap L_{\widetilde{G}}=\emptyset,\,I\cap R_{\widetilde{G}}\neq\emptyset\}$. Similar to the previous case we get that for every $I\in\mathcal{I}_6$ the set $I\cap V_{\hat{G}}$ is a m.\,i.\,s. in $I\cap V_{\hat{G}}$ and should contain at least one of the vertices $u_{i,t+1}$. So we have
        \begin{equation}\label{eq_iotam_6}
         \mathcal{I}_6=\iota_m(\hat{G})-1.
        \end{equation}
    \item It now suffices to find the size of $\mathcal{I}_7=\{I\in \mathcal{I}_{\overline{G}}\mid I\cap V_{\widetilde{G}}=\emptyset\}$. For every $I\in\mathcal{I}_7$ we have $w\in I$. The set $\widetilde{I}=I\cap (\{\widetilde{u_i}\mid i\le t\}\cup \{\widetilde{v_i}\mid i\le t\})$ should contain at least one of $\widetilde{v}_i$ and should me a m.\,i.\,s. in the corresponding subgraph. The number of such $\widetilde{I}$ equals to $(2^t-1)$. The set $\hat{I}=I\setminus(\{w\}\cup\widetilde{I})$ should be a m.\,i.\,s. in $\hat{G}$ and should contain at least one of $v_{i,t+1}$. The number of choices for such $\hat{I}$ is $(\iota_m(\hat{G})-2^{(s-1)t})$. At last we have
        \begin{equation}\label{eq_iotam_7}
        |\mathcal{I}_7|=(2^t-1)(\iota_m(\hat{G})-2^{(s-1)t}).
        \end{equation}
    \end{enumerate}
\end{enumerate}
By~\eqref{eq_iotam_3},~\eqref{eq_iotam_4},~\eqref{eq_iotam_5},~\eqref{eq_iotam_6},~\eqref{eq_iotam_7} and~\eqref{eq_iotam_hatG}, after some calculations we get $$\iota_m(G')=\sum_{k=1}^7|\mathcal{I}_k|=2^{st}\cdot\iota_m(G)+\frac{2^{st}-1}{2^t-1}\cdot\iota_m(\widetilde{G})-2.$$
It suffices to apply lemma~\ref{lem_iotamser_simple2} to $G'$.
\end{proof}

Let~$\overline{n}$ detone the binary representation of~$n$. Let~$w^{(k)}$ denote a binary word which consists of word $w$ repeated $k$~times.

\begin{lemma} \label{lem_iotamser_addtail}
Let $n,p,q\in{\mathbb N},\,n\ge2$. Let $n'$ be a natural number with binary representation $\overline{n}w^{(q)}$, where $w$ is a binary word of length $p$. Let $G$ be bipartite without isolated vertices, having $\iota_m(G)=n$. Then there is a bipartite graph without isolated vertices having $n'$ maximal independent sets and no more than $\nu(G)+2pq+20(p+\sqrt{pq})$ vertices.
\end{lemma}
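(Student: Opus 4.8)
The plan is to write $n'$ arithmetically and realise it by a \emph{two-level} application of Lemma~\ref{lem_main_iotamser}, the two levels corresponding to grouping the $q$ copies of $w$ into blocks of roughly $\sqrt{pq}$ bits.

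Let $m$ be the integer whose $p$-digit binary word is $w$ (so $0\le m<2^p$); then for every $j$ the word $w^{(j)}$ has value $m\cdot\frac{2^{pj}-1}{2^p-1}$, and reading off the word $\overline n\,w^{(q)}$ gives $n'=n\cdot 2^{pq}+m\cdot\frac{2^{pq}-1}{2^p-1}$. If $w=0^p$ then $n'=n\cdot 2^{pq}$ is realised by $G\sqcup(pq)K_{1,1}$ on exactly $\nu(G)+2pq$ vertices, because adjoining a disjoint $K_{1,1}$ doubles the number of m.\,i.\,s.; the remaining degenerate situations (leading zeros in $w$, and $p=1$) are reduced to the case $m\ge2$ below by this same doubling trick, to absorb the leading zeros of $w$ into the prefix $\overline n$, followed by Lemma~\ref{lem_iotamser_simple} to reattach a trailing $1$, at a cost of $O(p)$ extra vertices — routine bookkeeping I will not spell out. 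So assume $m\ge2$.

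Set $a=\lceil\sqrt{q/p}\,\rceil$, $b=\lfloor q/a\rfloor$ and $r=q-ab$, so $0\le r<a\le\sqrt{q/p}+1$ and $b\le q/a\le\sqrt{pq}$. The word $w^{(a)}$ has length $pa$ and value $M=m\cdot\frac{2^{pa}-1}{2^p-1}$, with $2\le M<2^{pa}$; by Theorem~\ref{the_iotamgen} there is a bipartite graph $\widetilde G$ \emph{without isolated vertices} — delete any isolated vertices, which changes neither bipartiteness nor $\iota_m$ — with $\iota_m(\widetilde G)=M$ and $\nu(\widetilde G)\le 2.88\,pa+O(1)$. First I apply Lemma~\ref{lem_main_iotamser} to $G$ and $\widetilde G$ with $s=b$, $t=pa$; since $\frac{2^{pab}-1}{2^{pa}-1}\cdot M=m\cdot\frac{2^{pab}-1}{2^p-1}$, the result is a bipartite graph $G_1$ without isolated vertices with $\iota_m(G_1)=2^{pab}n+m\cdot\frac{2^{pab}-1}{2^p-1}$ — the integer with binary word $\overline n\,w^{(ab)}$ — and $\nu(G_1)\le\nu(G)+\nu(\widetilde G)+2b(pa+1)+3$. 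Then, if $r\ge1$, I apply Lemma~\ref{lem_main_iotamser} once more, to $G_1$ and a fixed bipartite graph without isolated vertices realising $m$ on $O(p)$ vertices (again Theorem~\ref{the_iotamgen}), with $s=r$, $t=p$; the output is a bipartite graph without isolated vertices realising the integer with binary word $\overline n\,w^{(ab)}w^{(r)}=\overline n\,w^{(q)}$, namely $n'$, using at most $2r(p+1)+3+O(p)$ further vertices. (When $r=0$ the first application already yields $n'$.)

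Adding up, $\nu(\text{final})\le\nu(G)+\bigl(2.88\,pa+O(1)\bigr)+\bigl(2p\cdot ab+2b+3\bigr)+\bigl(2rp+2r+3+O(p)\bigr)$. Here $ab\le q$ gives $2p\cdot ab\le2pq$; the choice of $a$ gives $pa\le\sqrt{pq}+p$, $b\le\sqrt{pq}$, $rp<\sqrt{pq}+p$ and $r\le\sqrt{q/p}+1\le\sqrt{pq}+1$; hence every summand beyond the leading $2pq$ is $O(p+\sqrt{pq})$, and a careful accounting of the absolute constants yields $\nu(\text{final})\le\nu(G)+2pq+20(p+\sqrt{pq})$. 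The one genuine point is this balancing: realising a single block of $w$'s costs $\Theta(pa)$ vertices, while the outer use of Lemma~\ref{lem_main_iotamser} contributes $\Theta(pq)$ plus an additive $\Theta(q/a)$; choosing the block length $pa$ to be of order $\sqrt{pq}$ equalises $\Theta(pa)$ and $\Theta(q/a)$ and is exactly why the error term is $\Theta(\sqrt{pq})$. Getting this below the \emph{explicit} constant $20$ (rather than just $O(p+\sqrt{pq})$) needs the proportionality constant in $a$ to be picked with some care together with a direct check of the finitely many small pairs $(p,q)$, where the slack $20(p+\sqrt{pq})$ is enormous; and dealing with $m\le1$ and $p=1$ is a further such chore, using only that a disjoint $K_{1,1}$ doubles $\iota_m$.
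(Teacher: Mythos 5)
Your construction is essentially the paper's own proof: group the $q$ copies of $w$ into blocks of roughly $\sqrt{q/p}$ copies, realize the block value on about $2.88\cdot(\text{block length})$ vertices via Theorem~\ref{the_iotamgen}, append the blocks by Lemma~\ref{lem_main_iotamser} with $t=$ block length and $s=$ number of blocks, and absorb the remainder with one more application of Lemma~\ref{lem_main_iotamser}, giving $2pq+O(p+\sqrt{pq})$ extra vertices exactly as in the paper. The only differences are cosmetic --- the paper takes $k=\max\{\lceil\sqrt{q/p}\rceil,2\}$ and realizes $w^{(r)}$ directly for the remainder (with $s=1$, $t=pr$) rather than reusing $w$ with $s=r$, $t=p$ --- and the corner cases you defer ($m\le1$, i.e.\ $w=0\ldots01$, and $p=1$, where one must regroup the ones into blocks of value at least $2$ rather than iterate Lemma~\ref{lem_iotamser_simple}) are of the same bookkeeping kind that the paper itself treats only sketchily.
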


\begin{proof}
If $w$ contains only zeros, then the desired graph can be obtained by adding a matching on $2pq$ vertices to $G$. For the rest of the proof we assume $w$ being a not-all-zero word. Firstly we consider the case $q=1$. If $w=0\ldots01$, then the desired graph is obtained from $G$ by adding a matching on $2(p-1)$ vertices and applying lemma~\ref{lem_iotamser_simple}. Otherwise, let $\widetilde{n}$ be the number with binary representation $w$ ($\widetilde{n}>1$). Then by a remark to theorem~\ref{the_iotamgen}, there exists bipartite $\widetilde{G}$ without isolated vertices having $\iota_m(\widetilde{G})=\widetilde{n}$ and $\nu(\widetilde{G})<3p$. By applying lemma~\ref{lem_main_iotamser} to $G$ and $\widetilde{G}$ with $t=p$ and $s=1$, we obtain the graph needed.
\par For the rest of the proof we assume that $q\ge 2$ and $w$ is not an all-zero word. Put $k=\max\{\lceil\sqrt{q/p}\rceil,\,2\}$, and let $r$ be the residue of $q$ modulo $k$. The remark to the theorem~\ref{the_iotamgen} implies that there exists $\widetilde{G}$ such that $\nu(\widetilde{G})<3pk$ and the binary representation of $\iota_m(\widetilde{G})$ is $w^{(k)}$ with leading zeros trimmed. The application of lemma~\ref{lem_main_iotamser} to $G$ and $\widetilde{G}$ with $t=pk$ and $s=\lfloor q/k\rfloor$ gives us a graph $G''$ with the binary representation of $\iota_m(G'')$ being $\overline{n}w^{(q-r)}$ and
\begin{equation}\label{eq_iotamser_pqk}
\nu(G'')\le \nu(G)+3pk+2(q/k)(pk+1)+3=\nu(G)+2pq+3pk+2q/k+3.
\end{equation}
This, together with the inequalities $2q/k\le 2\sqrt{pq}$ and $k\le 2+\sqrt{q/p}$ implies
$$
\nu(G'')\le \nu(G)+2pq+6p+5\sqrt{pq}+3.
$$
If $r=0$, then $G''$ is the desired graph. If $r>0$, then using the remark to theorem~\ref{the_iotamgen}, consider a graph $\widetilde{G}_r$ having $\nu(\widetilde{G}_r)\le 3r$, and the binary represenation of $\iota_m(\widetilde{G}_r)$ being equal to $w^{(r)}$ with leading zeros trimmed. Then, by lemma~\ref{lem_main_iotamser} (applied with $G''$ and $\widetilde{G}_r$ as graph $G$ and $\widetilde{G}$ respectively, $s=1$ and $t=pr$), there exists $G'$ having $\overline{\iota_m(G')}=\overline{\iota_m(G'')}w^{(pr)}=\overline{n'}$ and
$$
\nu(G')\le \nu(G'')+3r+2pr+5\le \nu(G)+2pq+6p+5\sqrt{pq}+3r+2pr+8.
$$
Using the inequality $r<k\le2+\sqrt{q/p}$, we get
$$
\nu(G')\le  \nu(G)+2pq+10p+9\sqrt{pq}+8< \nu(G)+2pq+20(p+\sqrt{pq}).
$$
\end{proof}

\begin{theorem}\label{the_iotamser}
Let $n$ be a natural number with its binary representation of the form $w_1^{(q_1)}\ldots w_k^{(q_k)}$. Let $p_i$~be the length of~$w_i$. If $\sum_{i=1}^kp_i=o(\log n)$ then the following asymptotic holds for arbitrary~$q_i$:
\begin{equation}\label{eq_iotamser_th}
L_{\iota_m,\,\nu}^{\mathcal{B}}(n)\sim 2\log_2n.
\end{equation}
\end{theorem}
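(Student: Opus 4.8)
The plan is to combine the general lower bound from Theorem~\ref{the_iotamgen} with a careful application of Lemma~\ref{lem_iotamser_addtail} to build an economical realizing graph. The lower bound $L_{\iota_m,\,\nu}^{\mathcal{B}}(n)\ge 2\log_2 n$ is already available, so the entire content is the matching upper bound $L_{\iota_m,\,\nu}^{\mathcal{B}}(n)\le 2\log_2 n + o(\log n)$. First I would set up the recursive construction: write the binary representation of $n$ as $w_1^{(q_1)}\ldots w_k^{(q_k)}$, start from a small bipartite graph $G_0$ realizing the number whose binary representation is $w_1^{(q_1)}$ (obtainable from Theorem~\ref{the_iotamgen} applied to the short word $w_1$, padded by a matching, at vertex cost $O(p_1 q_1 + p_1)$), and then iteratively apply Lemma~\ref{lem_iotamser_addtail} $k-1$ times, each time appending the block $w_{i+1}^{(q_{i+1})}$ to the current binary representation. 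After step $i$ the graph realizes the number with binary representation $w_1^{(q_1)}\ldots w_i^{(q_i)}$.

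Next I would bound the total vertex count. Each application of Lemma~\ref{lem_iotamser_addtail} with parameters $p_i, q_i$ adds at most $2 p_i q_i + 20(p_i + \sqrt{p_i q_i})$ vertices. Summing over $i$, the dominant term is $2\sum_{i=1}^k p_i q_i$. Now observe that $\sum_{i=1}^k p_i q_i$ is exactly the number of binary digits of $n$, i.e. $\lfloor\log_2 n\rfloor + 1$, since the word $w_1^{(q_1)}\ldots w_k^{(q_k)}$ has length $\sum p_i q_i$. Hence the main term $2\sum p_i q_i = 2\log_2 n + O(1)$, which is precisely the target leading order. It remains to show the error terms are $o(\log n)$: we must control $\sum_{i=1}^k 20(p_i + \sqrt{p_i q_i})$ together with the $O(p_1 q_1 + p_1)$ cost of the seed graph — but $p_1 q_1 \le \log_2 n$ so the seed is fine, and $\sum p_i \le \sum_{i} p_i = o(\log n)$ by hypothesis, so $\sum 20 p_i = o(\log n)$. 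For the cross terms, by Cauchy--Schwarz, $\sum_{i=1}^k \sqrt{p_i q_i} \le \sqrt{\big(\sum_i p_i\big)\big(\sum_i q_i\big)}$; since $\sum p_i q_i \le \log_2 n$ and each $p_i\ge 1$ we get $\sum q_i \le \sum p_i q_i \le \log_2 n$, so $\sum\sqrt{p_i q_i} \le \sqrt{(\sum p_i)\log_2 n} = \sqrt{o(\log n)\cdot\log n} = o(\log n)$. Combining, $\nu(G_k) \le 2\log_2 n + o(\log n)$, giving $L_{\iota_m,\,\nu}^{\mathcal{B}}(n) \le 2\log_2 n + o(\log n)$, and with the lower bound this yields the claimed asymptotic.

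I expect the main obstacle to be purely bookkeeping: verifying that the iterative chaining of Lemma~\ref{lem_iotamser_addtail} is legitimate, namely that at each stage the current graph $G_i$ is bipartite and without isolated vertices (so that the lemma applies again) and that the binary representation genuinely concatenates as $\overline{\iota_m(G_{i+1})} = \overline{\iota_m(G_i)}\, w_{i+1}^{(q_{i+1})}$. The definition of $n'$ in Lemma~\ref{lem_iotamser_addtail} via $\overline{n}w^{(q)}$ does exactly this, and the lemma's output is stated to be bipartite without isolated vertices, so the induction closes cleanly; one only needs $\iota_m(G_i)\ge 2$, which holds since after the first block the realized number has at least two binary digits (we may assume $n\ge 4$, absorbing small $n$ into the $o(\log n)$ slack, or handle the degenerate case $w_1 = 1, q_1 = 1$ separately). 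A secondary subtlety is the edge case where some $w_i$ is all zeros — but Lemma~\ref{lem_iotamser_addtail} already treats this by adjoining a matching, so no extra work is needed. Everything else is the Cauchy--Schwarz estimate above, which is routine.
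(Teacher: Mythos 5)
Your overall route is the same as the paper's: iterate Lemma~\ref{lem_iotamser_addtail} block by block, note that the main term $2\sum_i p_iq_i$ is $2\log_2 n+O(1)$ because $\sum_i p_iq_i$ is the binary length of $n$, and kill the error terms using $\sum_i p_i=o(\log n)$ plus Cauchy--Schwarz (the paper bounds $\sum_i\sqrt{p_iq_i}\le\sqrt{k\sum_i p_iq_i}\le\sqrt{2k\log_2 n}$ with $k\le\sum_i p_i$; your variant $\sqrt{(\sum_i p_i)(\sum_i q_i)}$ with $\sum_i q_i\le\sum_i p_iq_i$ is equally valid). The bookkeeping and the closure of the induction (the lemma outputs a bipartite graph without isolated vertices) are handled correctly.

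The one step that does not work as written is the seed. ``Theorem~\ref{the_iotamgen} applied to $w_1$, padded by a matching'' realizes a number with binary representation $w_1 0\ldots 0$ (adding a matching only multiplies $\iota_m$ by a power of $2$, i.e.\ appends zeros), not $w_1^{(q_1)}$. Moreover, your fallback justification that an $O(p_1q_1+p_1)$ seed cost is harmless ``since $p_1q_1\le\log_2 n$'' is not sufficient: if the implied constant exceeded $2$ (e.g.\ $2.88\,p_1q_1$ from applying Theorem~\ref{the_iotamgen} to the whole first block) and $p_1q_1$ were a positive fraction of $\log_2 n$, the leading constant $2$ in the upper bound would be lost. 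The repair is immediate and stays inside your own framework: treat the first block exactly like the others --- realize the number with binary representation $w_1$ at cost $<3p_1=o(\log n)$ (remark to Theorem~\ref{the_iotamgen}; handle $w_1=1$ by seeding instead with, say, $P_4$, which realizes $11$), then apply Lemma~\ref{lem_iotamser_addtail} with $w=w_1$, $q=q_1-1$, so the first block also costs $2p_1q_1+O(p_1+\sqrt{p_1q_1})$ and the rest of your estimate goes through unchanged.
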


\begin{proof}
The lower bound was already stated in theorem~\ref{the_iotamgen}, so we proceed to the upper. Lemma~\ref{lem_iotamser_addtail} implies that there is a graph $G$ with $\iota_m(G)=n$ and
\begin{equation}\label{eq_iotamser_th1}
\nu(G)\le 2\log_2n+O\left(\sum_{i=1}^kp_i+\sum_{i=1}^k\sqrt{p_iq_i}\right).
\end{equation}
Then the inequality $\sum_{i=1}^k p_iq_i<2\log_2n$ and Cauchy--–Bunyakovsky--–Schwarz inequality imply
\begin{equation}\label{eq_iotamser_th2}
\sum_{i=1}^k\sqrt{p_iq_i}\le\sqrt{2k\log_2n}=o(\log n).
\end{equation}
Finally~\eqref{eq_iotamser_th1} and~\eqref{eq_iotamser_th2} imply~\eqref{eq_iotamser_th}.
\end{proof}

\par The research was supported by the Russian President Grant MK-3429.2010.1 and RFBR Grant No.\,10-01-00768a.

\end{document}